\newtheorem{theorem}{Theorem}
\newtheorem{lemma}[theorem]{Lemma}
\newtheorem{remark}[theorem]{Remark}
\newenvironment{proof}[1][Proof]{\textbf{#1.} }{\ \rule{0.5em}{0.5em}}
\newcommand{\PP}{\mathbb{P}}
\DeclareMathOperator*{\essinf}{ess\,inf}
\begin{document}

\title{Sharp large deviation estimates  for Gaussian extrema}
\author{José M. Zapata\thanks{The author acknowledges the partial support of the Ministerio de Ciencia e Innovación
of Spain in the project PID2022-137396NB-I00, funded by MICIU/AEI/10.13039/501100011033
and by 'ERDF A way of making Europe'.}}
\date{}
\maketitle

\begin{abstract} 
We establish sharp large-deviation asymptotic estimates for the maximum order statistic of i.i.d.\ standard normal random variables on all Borel subsets of the positive real line. This result  yields more accurate tail approximations than the classical Gumbel limit.
  
\smallskip
\noindent \textit{Keywords:}
Gaussian maxima; extreme value theory; large deviations;  Gumbel distribution.
\end{abstract}

\section{Main result}

Let $(X_n)_{n\in\mathbb{N}}$ be i.i.d.\ standard Gaussian random variables, and let
$X_{(n)}=\max_{1\le i\le n} X_i$ denote the maximum order statistic. A classical result in extreme value theory asserts that there exist sequences $a_n>0$ and $b_n\in\mathbb{R}$ such that
\[
Y_n = a_n\bigl(X_{(n)} - b_n\bigr)
\;\xrightarrow{d}\;
\Lambda,
\]
where $\Lambda(x)=e^{-e^x}$ is the standard Gumbel distribution.

The study of appropriate normalizations for maxima dates back to Fisher and Tippett \cite{{fisher-tippett}}, who first identified the possible limiting forms of extreme order statistics. Later, Gnedenko \cite{gnedenko} established necessary and sufficient conditions for convergence to the three classical extreme value distributions, with the Gaussian case belonging to the Gumbel domain of attraction. Cramér \cite{cramer} subsequently derived explicit asymptotic expressions for the normalizing constants in the Gaussian setting.

 Throughout this note, we will consider the standard choice of normalization sequences in the Gaussian case 
\[
a_n = \sqrt{2\log n},
\qquad
b_n = a_n - \frac{\log\!\bigl(\sqrt{2\pi}\,a_n\bigr)}{a_n}.
\]

While the Gumbel limit accurately captures  moderate fluctuations of
$X_{(n)}$ (for a fixed sample size $n$), it does not describe events lying  in the tail of the distribution,
whose probabilities decay much faster than predicted by the Gumbel. Such extreme tail probabilities are of central importance in risk analysis and actuarial sciences, see, e.g., \cite{embrechts,mcneil}.

In order to provide a more accurate estimates of the such a tail probabilities, in this note we adopt a large deviation theory approach. Large deviation theory concerns the asymptotic behavior of probabilities of rare events and characterizes their decay on an appropriate logarithmic scale, see e.g.~\cite{dembo}.

We investigate the rescaled normalization
\begin{equation}\label{eq:normalized}
Z_n
=
\frac{a_n\bigl(X_{(n)} - b_n\bigr)}{\log n}
=
\frac{X_{(n)} - b_n}{a_n/2}.
\end{equation}
 Our main result establishes a sharp large
deviation estimates of $(Z_n)_{n\in\mathbb{N}}$ for all Borel sets  on the right tail of the distribution, and reads as follows.

\begin{theorem}\label{thm:main}
Let $(Z_n)_{n\in\mathbb{N}}$ be defined by \eqref{eq:normalized}. Then, for every Borel set
$A\subset [0,\infty)$,
\begin{equation}\label{eq:LDP}
\lim_{n\to\infty}
\frac{1}{\log n}\,
\log \PP(Z_n\in A)
=
- \operatorname*{ess\,inf}_{x\in A}
\left( x + \frac{x^{2}}{4} \right).
\end{equation}
\end{theorem}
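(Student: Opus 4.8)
The plan is to exploit the absolute continuity of $Z_n$ and reduce the whole statement to sharp pointwise control of its density. Since $\PP(X_{(n)}\le t)=\Phi(t)^n$ (with $\Phi,\varphi$ the standard normal CDF and density), the change of variables $t=b_n+\tfrac{a_n}{2}x$ in \eqref{eq:normalized} shows that $Z_n$ has density
\[
f_{Z_n}(x)=\tfrac{a_n}{2}\,n\,\Phi\!\bigl(b_n+\tfrac{a_n}{2}x\bigr)^{\!n-1}\varphi\!\bigl(b_n+\tfrac{a_n}{2}x\bigr),\qquad x\ge 0 .
\]
Writing $I(x)=x+x^2/4$, the algebraic core of the argument is the identity, obtained by inserting $a_n^2=2\log n$ and the definition of $b_n$ into $\log\varphi$,
\[
\log\!\bigl(\tfrac{a_n}{2}\,n\,\varphi(b_n+\tfrac{a_n}{2}x)\bigr)=C_n-(\log n)\,I(x)+\eta_n x ,
\]
where $C_n=2\log a_n-\log 2-\tfrac12 c_n^2$ and $\eta_n=\tfrac14\log(2\pi)+\tfrac12\log a_n$ with $c_n=\log(\sqrt{2\pi}a_n)/a_n$; crucially $C_n=o(\log n)$ and $\eta_n=o(\log n)$. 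This already gives the pointwise limit $\tfrac{1}{\log n}\log f_{Z_n}(x)\to -I(x)$ and singles out $I$ as the natural rate.

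For the upper bound I would bound $\Phi^{n-1}\le 1$, so that $\PP(Z_n\in A)\le e^{C_n}\int_A e^{-(\log n)I(x)+\eta_n x}\,dx$. Splitting the integral at a large fixed radius $R$: on $A\cap[0,R]$ I use $e^{\eta_n x}\le e^{\eta_n R}$ together with $I(x)\ge \essinf_{A}I=:m$ for a.e.\ $x\in A$, giving a contribution $\le R\,e^{\eta_n R}e^{-(\log n)m}$; on $A\cap(R,\infty)$ I absorb the linear correction into the quadratic term (for $n$ large, $\eta_n x\le\tfrac{\log n}{8}x^2$), leaving a Gaussian integral of size $e^{-(\log n)R^2/8+o(\log n)}$. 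Choosing $R$ with $R^2/8>m$ renders the tail negligible and yields $\limsup\tfrac{1}{\log n}\log\PP(Z_n\in A)\le -m$.

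For the matching lower bound, fix $\delta>0$ and set $B=\{x\in A:\ I(x)<m+\delta\}$; by the definition of the essential infimum, $B$ has positive Lebesgue measure. Since $t=b_n+\tfrac{a_n}{2}x\ge b_n$ for $x\ge 0$ and $\Phi(b_n)^{n}\to e^{-1}$ under the chosen normalization, one has $\Phi(t)^{n-1}\ge\Phi(b_n)^{n-1}\ge c_0>0$ uniformly on $B$ for large $n$. Combined with $\eta_n x\ge 0$ and $I(x)<m+\delta$ on $B$, this gives $\PP(Z_n\in A)\ge\int_B f_{Z_n}\ge c_0\,e^{C_n}\,|B|\,e^{-(\log n)(m+\delta)}$, whence $\liminf\tfrac{1}{\log n}\log\PP(Z_n\in A)\ge -(m+\delta)$; letting $\delta\downarrow 0$ closes the argument. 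The degenerate case in which $A$ is Lebesgue-null is automatic, since then $\PP(Z_n\in A)=0$ and $\essinf_A I=+\infty$, so both sides equal $-\infty$ — which is precisely why the statement is phrased with $\essinf$ rather than $\inf$.

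I expect the main obstacle to be the non-uniformity in $x$ of the density asymptotics: the correction $\eta_n x$ grows with $x$, so one cannot claim $f_{Z_n}(x)=e^{-(\log n)I(x)(1+o(1))}$ uniformly, and the entire upper bound hinges on trading this linear error against the \emph{exact} quadratic decay $-\tfrac{\log n}{4}x^2$ on the far tail. Calibrating the splitting radius $R$ and the threshold $\eta_n x\le\tfrac{\log n}{8}x^2$ so that they interact correctly with the essential infimum is the delicate point; everything else is bookkeeping on the $o(\log n)$ prefactors.
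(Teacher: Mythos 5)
Your proof is correct, and its skeleton coincides with the paper's: work with the explicit density of $Z_n$, split a general Borel set into a bounded window plus a far tail, and extract the essential infimum from the positive-measure sublevel set $\{x\in A: I(x)<m+\delta\}$. The execution of the two technical ingredients differs in instructive ways. For the sub-exponential bookkeeping you derive the exact identity $\log\bigl(\tfrac{a_n}{2}n\,\varphi(t_n(x))\bigr)=C_n-(\log n)I(x)+\eta_n x$ and tame the non-uniform correction $\eta_n x$ by absorbing it into the quadratic decay on the tail ($\eta_n x\le\tfrac{\log n}{8}x^2$ for $x>R$ and $n$ large); the paper instead proves uniform convergence of $\tfrac{1}{\log n}\log f_n$ only on compacts (Lemma~\ref{lem:density}) and disposes of the tail with a separate distribution-function bound $1-\Phi^n(t)\le n\phi(t)/t$ (Lemmas~\ref{lem:ineqNormal} and~\ref{lem:limsup}); these are two genuinely different mechanisms for the same tail, and yours has the advantage of making the source of the non-uniformity completely explicit. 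For the factor $\Phi^{n-1}(t_n(x))$ you simply squeeze it between $\Phi(b_n)^{n-1}\ge c_0>0$ (from $\Phi(b_n)^n\to e^{-1}$) and $1$, which is cleaner than the paper's estimate $|T_2(n,x)|\le 4(n-1)\phi(b_n)/(a_n^2 b_n)\to 0$, though both ultimately rest on $n(1-\Phi(b_n))=O(1)$, i.e.\ on Mills' ratio. Two small points worth making explicit in a write-up: in the lower bound the set $B=\{x\in A: I(x)<m+\delta\}$ is automatically bounded because $I$ is increasing and coercive on $[0,\infty)$, so $0<|B|<\infty$ and $\log|B|$ contributes only $o(\log n)$; and the degenerate case of Lebesgue-null $A$ is handled exactly as you say, both sides being $-\infty$.
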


For a Borel set $A \subset [0,\infty)$ and large $n$, the theorem
yields the  approximation
\[
\mathbb{P}(Z_n \in A)
\;\approx\;
n^{-I_A},
\qquad
\text{with}
\qquad
I_A
=
\operatorname*{ess\,inf}_{x\in A}
\left( x + \frac{x^{2}}{4} \right).
\]
In Figure~\ref{fig:Zn}, we compare on a logarithmic
scale the true tail probability $\mathbb{P}(Z_n > x)$ (black curve), the standard Gumbel
approximation (blue curve), and the large-deviation approximation (orange curve). The figure shows that the large-deviation approximation significantly outperforms the standard Gumbel approximation in the tail regime. In addition, for sufficiently large
values of $x$, the true probability becomes so small that numerical evaluation
suffers from underflow and is rounded to zero; as a consequence, the black curve
(representing its logarithm) drops to $-\infty$. In contrast, the
large-deviation approximation  continues to provide an accurate description of the tail behavior.


\begin{figure}[t]
\centering
\includegraphics[width=0.75\textwidth]{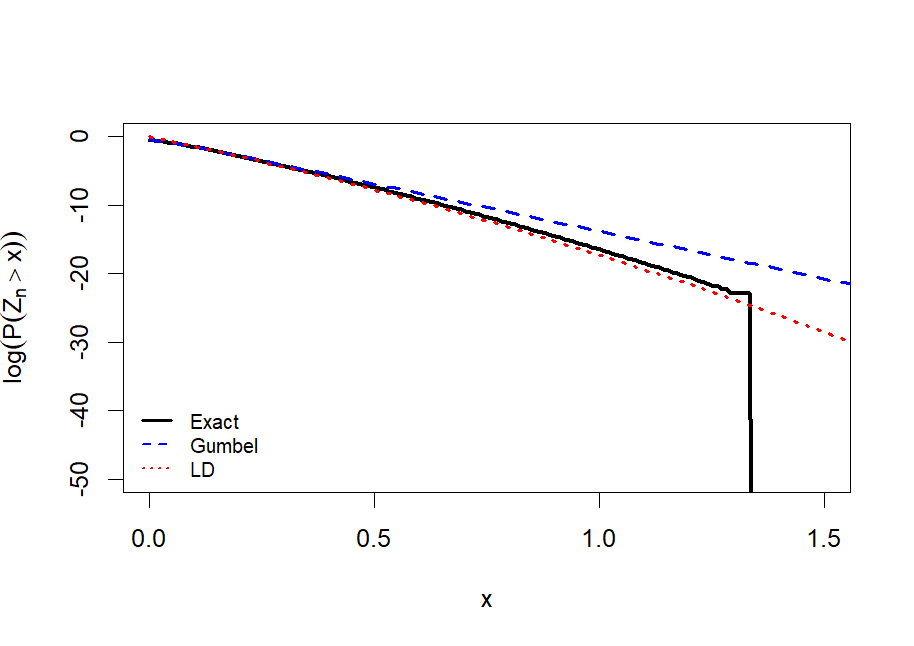}
\caption{Comparison of $\mathbb{P}(Z_n > x)$ on a logarithmic scale: true
probability (black), standard Gumbel approximation (blue), and large-deviation
approximation (orange). For large values of $x$, the true probability becomes
numerically indistinguishable from zero and its logarithm drops to $-\infty$.
The large-deviation approximation accurately captures the extreme tail
behavior, while the Gumbel approximation is only reliable for moderate
deviations.}
\label{fig:Zn}
\end{figure}


\section{Proof of the main result}

Let $(X_i)_{i\ge1}$ be i.i.d.\ standard Gaussian random variables and set $X_{(n)}=\max_{1\le i\le n} X_i$. Let $Z_n$ denote the normalized maximum defined in \eqref{eq:normalized}. In this section, we use the following notations
\[
t_n(x)=b_n+\tfrac{1}{2}a_n x,
\qquad
I(x)=x+\tfrac{x^2}{4}.
\]
Let $F_n$ denote the distribution function and $f_n$ the density function of $Z_n$. It is easy to verify that 
\begin{equation}\label{eq:distribution}
F_n(x)=\PP(Z_n\le x)=\Phi^n\bigl(t_n(x)\bigr),
\qquad
f_n(x)=\frac{n a_n}{2}\,\Phi^{\,n-1}\bigl(t_n(x)\bigr)\,\phi\bigl(t_n(x)\bigr),
\end{equation}
where $\Phi$ and $\phi$ denote the standard normal distribution and density.

In the following, we prove some preliminary results.
\begin{lemma}\label{lem:ineqNormal}
There exists $t_0$ such that
\begin{equation}\label{eq:normalI}
0 \le -\log \Phi(t) \le \frac{2\,\phi(t)}{t},
\qquad \mbox{ for all }t>t_0.
\end{equation}
Moreover, 
\begin{equation}\label{eq:normalII}
1-\Phi(t)^n \le n\,\frac{\phi(t)}{t},
\qquad\mbox{ for all }t>0\mbox{ and }n\in\mathbb{N}.
\end{equation}
\end{lemma}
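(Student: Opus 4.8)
The plan is to reduce both inequalities to the standard Mills-ratio upper bound $1-\Phi(t)\le \phi(t)/t$, after which each statement follows from an elementary one-variable estimate. First I would establish the Mills bound itself: since $\phi'(s)=-s\,\phi(s)$ and $s/t\ge 1$ on $[t,\infty)$,
\[
1-\Phi(t)=\int_t^\infty \phi(s)\,ds \;\le\; \frac{1}{t}\int_t^\infty s\,\phi(s)\,ds = \frac{1}{t}\bigl[-\phi(s)\bigr]_t^\infty = \frac{\phi(t)}{t},
\]
valid for every $t>0$. This is the single analytic ingredient on which everything else rests.

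For the first inequality \eqref{eq:normalI}, the lower bound $-\log\Phi(t)\ge 0$ is immediate from $\Phi(t)\le 1$. For the upper bound I would set $u=1-\Phi(t)$ and invoke the elementary inequality $-\log(1-u)\le 2u$, valid for $u\in[0,\tfrac12]$; this is checked by noting that $g(u)=2u+\log(1-u)$ satisfies $g(0)=0$ and $g'(u)=2-\tfrac{1}{1-u}\ge 0$ precisely on $[0,\tfrac12]$, so $g\ge 0$ there. Since $\Phi(t)>\tfrac12$ for $t>0$, we have $u=1-\Phi(t)<\tfrac12$, and therefore
\[
-\log\Phi(t)=-\log(1-u)\le 2u = 2\bigl(1-\Phi(t)\bigr)\le \frac{2\phi(t)}{t},
\]
where the last step uses the Mills bound. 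This yields \eqref{eq:normalI} with any threshold $t_0\ge 0$.

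For the second inequality \eqref{eq:normalII}, I would factor
\[
1-\Phi(t)^n=\bigl(1-\Phi(t)\bigr)\sum_{k=0}^{n-1}\Phi(t)^k,
\]
and bound the geometric sum by $n$ using $0\le\Phi(t)\le 1$. Combining this with the Mills bound gives $1-\Phi(t)^n\le n\bigl(1-\Phi(t)\bigr)\le n\,\phi(t)/t$, now valid for all $t>0$ and all $n\in\N$.

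The proof presents no serious obstacle. The only points that require care are getting the direction of the Mills inequality right and correctly identifying the range $[0,\tfrac12]$ on which the logarithmic estimate holds; once these are in place, the two claims are short assemblies of the same bound with an elementary logarithm estimate and a telescoping factorization, respectively. Indeed, because $\Phi(t)>\tfrac12$ for every $t>0$, the threshold $t_0$ could be taken equal to $0$; the statement leaves it unspecified only because no sharper constant is needed in the sequel.
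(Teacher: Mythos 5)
Your proof is correct and follows essentially the same route as the paper's: the Mills-ratio bound $1-\Phi(t)\le\phi(t)/t$, the elementary estimate $-\log(1-u)\le 2u$, and the geometric factorization of $1-\Phi(t)^n$. The only differences are cosmetic improvements—you derive the Mills bound rather than cite it, and you pin down the admissible range $u\in[0,\tfrac12]$ precisely, which shows one may take $t_0=0$, whereas the paper settles for an unspecified large $t_0$.
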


\begin{proof}
Since $0<\Phi(t)\le1$, we trivially have $-\log\Phi(t)\ge0$.
By the standard upper bound in Mills' ratio (see e.g.~\cite[p.~45]{small}), we have
\[
1-\Phi(t)\le \frac{\phi(t)}{t}, \qquad t>0.
\]
 Using the elementary inequality
$-\log u\le 2(1-u)$ for $u<1$ close to $1$, and that  $\Phi(t)\to1$ as $t\to\infty$, we obtain
\[
-\log\Phi(t)
=
2(1-\Phi(t))
\le \frac{2\,\phi(t)}{t},
\]
for $t$ large enough, which proves \eqref{eq:normalI}.

As for \eqref{eq:normalII}, let $u\in[0,1]$. Then
\[
1-u^n=(1+u+\cdots+u^{n-1})(1-u)\le n(1-u).
\]
Applying this inequality with $u=\Phi(t)$ and using again
$1-\Phi(t)\le \phi(t)/t$ yields the result.
\end{proof}

\begin{lemma}\label{lem:limsup}
For every $x>0$,
\[
\limsup_{n\to\infty}\frac{1}{\log n}\log \PP(Z_n>x)\le -I(x).
\]
\end{lemma}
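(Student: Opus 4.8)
The plan is to bound the tail probability $\PP(Z_n>x)$ directly using the explicit distribution function from \eqref{eq:distribution} and the estimates of Lemma~\ref{lem:ineqNormal}. Since $\PP(Z_n>x)=1-F_n(x)=1-\Phi^n\bigl(t_n(x)\bigr)$, inequality \eqref{eq:normalII} applied at $t=t_n(x)$ gives the clean upper bound
\[
\PP(Z_n>x)\le n\,\frac{\phi\bigl(t_n(x)\bigr)}{t_n(x)}.
\]
Taking $\tfrac{1}{\log n}\log$ of both sides, the factor $n$ contributes $1$, and the remaining work is to compute the asymptotics of $\tfrac{1}{\log n}\log\bigl(\phi(t_n(x))/t_n(x)\bigr)$.

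The heart of the argument is therefore the asymptotic analysis of $t_n(x)=b_n+\tfrac12 a_n x$ as $n\to\infty$. First I would record that $a_n=\sqrt{2\log n}$ and $b_n=a_n-\tfrac{\log(\sqrt{2\pi}\,a_n)}{a_n}$, so $b_n=a_n(1+o(1))$ and $t_n(x)=a_n\bigl(1+\tfrac x2\bigr)+o(a_n)$. Since $\log\phi(t)=-\tfrac12 t^2-\tfrac12\log(2\pi)$, the dominant term is $-\tfrac12 t_n(x)^2$. I would expand
\[
t_n(x)^2=\Bigl(b_n+\tfrac12 a_n x\Bigr)^2=b_n^2+a_n b_n x+\tfrac14 a_n^2 x^2,
\]
and use $b_n^2=a_n^2-2\log(\sqrt{2\pi}\,a_n)+o(1)$ together with $a_n b_n=a_n^2+o(a_n^2)=2\log n+o(\log n)$ and $a_n^2=2\log n$. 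The logarithmic corrections coming from the $-\tfrac12\log(2\pi)$ term in $\log\phi$, from the $\log t_n(x)$ term, and from the $-2\log(\sqrt{2\pi}\,a_n)$ piece of $b_n^2$ are all $O(\log\log n)=o(\log n)$, hence they vanish after dividing by $\log n$.

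The key cancellation is that $-\tfrac12 b_n^2$ contributes $+\log n$ after normalization (one checks $-\tfrac12 b_n^2=-\log n+o(\log n)$, using $b_n^2=2\log n-2\log(\sqrt{2\pi}\,a_n)+o(1)$), which exactly cancels the $+1$ from the factor $n$. What survives is the cross term $-\tfrac12 a_n b_n x=-\tfrac12(2\log n)x\,(1+o(1))=-(\log n)x\,(1+o(1))$ and the quadratic term $-\tfrac18 a_n^2 x^2=-\tfrac14(\log n)x^2$. Dividing by $\log n$ and passing to the limit yields
\[
\limsup_{n\to\infty}\frac{1}{\log n}\log\PP(Z_n>x)\le 1-1-x-\tfrac{x^2}{4}=-\Bigl(x+\tfrac{x^2}{4}\Bigr)=-I(x),
\]
as required.

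The main obstacle I anticipate is bookkeeping rather than conceptual: one must track the definition of $b_n$ carefully enough to verify that $-\tfrac12 b_n^2+\log n\to$ (a lower-order term), so that the leading $\log n$ cancels precisely with the factor $n$, while simultaneously confirming that every correction term — the $\log t_n(x)$ denominator, the normalization constant in $\phi$, and the $\log(\sqrt{2\pi}\,a_n)$ appearing in $b_n$ — is genuinely $o(\log n)$ and thus negligible after dividing by $\log n$. A clean way to organize this is to write $\log\bigl(n\,\phi(t_n(x))/t_n(x)\bigr)=\log n-\tfrac12 t_n(x)^2-\tfrac12\log(2\pi)-\log t_n(x)$ and substitute the expansion of $t_n(x)^2$ term by term, isolating the $x$-dependent coefficients of $\log n$.
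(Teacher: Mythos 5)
Your proposal is correct and follows essentially the same route as the paper: both bound $\PP(Z_n>x)=1-\Phi^n(t_n(x))$ by $n\,\phi(t_n(x))/t_n(x)$ via inequality \eqref{eq:normalII} and then compute the limit of $\tfrac{1}{\log n}\log\phi(t_n(x))$, which equals $-(1+\tfrac{x}{2})^2$ and yields $1-(1+\tfrac{x}{2})^2=-I(x)$. Your term-by-term expansion of $t_n(x)^2$ simply spells out the asymptotics that the paper states more compactly.
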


\begin{proof}
Fix $x>0$. 
Applying \eqref{eq:normalII} in  Lemma~\ref{lem:ineqNormal}, we obtain
\[
\frac{1}{\log n}\log\PP(Z_n>x)=\frac{1}{\log n}\log\left(1-\Phi\!\bigl(t_n(x)\bigr)^n \right)
\le
1+\frac{\log\phi\!\bigl(t_n(x)\bigr)}{\log n}
-\frac{\log t_n(x)}{\log n}.
\]
Since $\log t_n(x)/\log n\to0$ as $n\to\infty$ and
\[
\lim_{n\to\infty}\frac{\log\phi\!\bigl(t_n(x)\bigr)}{\log n}
=
-\left(1+\frac{x}{2}\right)^2,
\]
it follows that
\[
\limsup_{n\to\infty}\frac{1}{\log n}\log\PP(Z_n>x)
\le
1-\left(1+\frac{x}{2}\right)^2
=
-x-\frac{x^2}{4}
=
-I(x),
\]
which concludes the proof.
\end{proof}

\begin{lemma}\label{lem:density}
Let $Z_n$ be defined as above and let $f_n$ denote its density.
Then, for any fixed $M>0$,  
\[
\lim_{n\to\infty}\frac{1}{\log n}\log f_n(x)
=
-I(x),
\]
 uniformly on $x\in[0,M]$.
\end{lemma}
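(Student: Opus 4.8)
The plan is to take logarithms in the explicit density formula \eqref{eq:distribution} and to treat the three resulting summands separately. Writing
\[
\frac{1}{\log n}\log f_n(x)
=
\frac{1}{\log n}\log\frac{n a_n}{2}
+\frac{n-1}{\log n}\log\Phi\bigl(t_n(x)\bigr)
+\frac{1}{\log n}\log\phi\bigl(t_n(x)\bigr),
\]
I would show that, uniformly on $[0,M]$, the first term converges to $1$, the third term converges to $-(1+x/2)^2$, and the middle term converges to $0$. Summing these limits gives $1-(1+x/2)^2=-I(x)$, which is the assertion.

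The first term is independent of $x$ and equals $1+(\log a_n-\log 2)/\log n\to 1$. For the third term I would use $\log\phi(t)=-\tfrac12\log(2\pi)-t^2/2$ together with the identity $t_n(x)=a_n(1+x/2)-c_n$, where $c_n=\log(\sqrt{2\pi}\,a_n)/a_n$. Since $a_n^2=2\log n$ and $a_n c_n=\log(\sqrt{2\pi}\,a_n)$, expanding the square yields
\[
\frac{t_n(x)^2}{2}
=
\Bigl(1+\tfrac{x}{2}\Bigr)^2\log n
-\Bigl(1+\tfrac{x}{2}\Bigr)\log\bigl(\sqrt{2\pi}\,a_n\bigr)
+\frac{c_n^2}{2}.
\]
Dividing by $\log n$ and letting $n\to\infty$ gives $\frac{1}{\log n}\log\phi(t_n(x))\to-(1+x/2)^2$; the convergence is uniform on $[0,M]$ because the only $x$-dependent error term is bounded by $(1+M/2)\log(\sqrt{2\pi}\,a_n)/\log n\to 0$.

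The main obstacle is the middle term, which I must show is negligible in spite of the prefactor $n-1$. Because $t_n(x)\ge t_n(0)=b_n\to\infty$, for all large $n$ we have $t_n(x)>t_0$ for every $x\in[0,M]$, so \eqref{eq:normalI} applies and gives
\[
0\le -\frac{n-1}{\log n}\log\Phi\bigl(t_n(x)\bigr)
\le
\frac{2(n-1)}{\log n}\cdot\frac{\phi\bigl(t_n(x)\bigr)}{t_n(x)}.
\]
The key observation is that $\phi$ is decreasing on $(0,\infty)$ and $t_n(x)\ge b_n>0$, so $\phi(t_n(x))/t_n(x)\le \phi(b_n)/b_n$, which reduces everything to the endpoint $x=0$. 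Evaluating the expansion above at $x=0$ gives $n\phi(b_n)=a_n e^{-c_n^2/2}$, hence $\phi(b_n)/b_n=a_n e^{-c_n^2/2}/(n b_n)$, and therefore
\[
0\le -\frac{n-1}{\log n}\log\Phi\bigl(t_n(x)\bigr)
\le
\frac{2 a_n}{b_n\log n}\,e^{-c_n^2/2}.
\]
Since $a_n/b_n\to 1$ and $e^{-c_n^2/2}\le 1$, the right-hand side tends to $0$, and as it no longer depends on $x$ this convergence is uniform on $[0,M]$.

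Combining the three limits establishes the lemma. The genuinely delicate point is that the apparently dangerous factor $n-1$ in the $\log\Phi$ term is exactly compensated by the smallness $\phi(b_n)/b_n\sim 1/n$ that is built into the definition of $b_n$; the monotonicity of $\phi$ is what lets me pass from the pointwise estimate to a uniform one without further work.
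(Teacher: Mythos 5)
Your proposal is correct and follows essentially the same route as the paper: the same three-term decomposition of $\frac{1}{\log n}\log f_n(x)$, the same use of the bound $0\le-\log\Phi(t)\le 2\phi(t)/t$ to control the $(n-1)\log\Phi(t_n(x))$ term by its value at $x=0$, and the same expansion of $t_n(x)^2/2$ for the $\log\phi$ term. The only difference is cosmetic: where the paper asserts by ``inspection'' that $n\,\phi(b_n)/b_n\to1$, you verify it explicitly via $n\,\phi(b_n)=a_n e^{-c_n^2/2}$, which is a welcome extra detail.
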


\begin{proof}
Since $a_n^2=2\log n$, we may write
\[
\frac{1}{\log n}\log f_n(x)
=
\frac{2}{a_n^2}\log\!\left(\frac{n a_n}{2}\right)
+
\frac{2(n-1)}{a_n^2}\log\Phi\!\bigl(t_n(x)\bigr)
+
\frac{2}{a_n^2}\log\phi\!\bigl(t_n(x)\bigr).
\]
Denote by $T_i(n,x)$, $i=1,2,3$, the i-th term in the sum above. We compute the limit of each of these terms and verify that the convergence is  uniform in  $[0,M]$.

First,
\[
\lim_{n\to\infty}T_1(n,x)=
1+\lim_{n\to\infty}\frac{\log a_n-\log 2}{\log n}=1.
\]
This convergence is obviously uniform since the
term does not depend on $x$.

Second, applying \eqref{eq:normalI} in Lemma \ref{lem:ineqNormal} taking into account that $t_n(x)\ge t_n(0)=b_n\to \infty$, we have
\begin{equation}\label{eq:T2}
|T_2(n,x)|\le-\frac{2(n-1)}{a_n^2}\log\Phi(b_n)\le 
\frac{4(n-1)}{a_n^2}\frac{\phi(b_n)}{b_n}
\end{equation}
for $n$ large enough and all $x\ge 0$. 
Inspection shows that  $n\phi(b_n)/b_n\to 1$ as $n\to \infty$. Then, the upper bound in \eqref{eq:T2} converges to $0$ as $n\to\infty$. Since it does not depend on $x$ the convergence is  uniform in  $x$.

Third, using that
\[
t_n(x)=a_n\left(1+\frac{x}{2}\right)-\frac{\log\left(\sqrt{2\pi}a_n\right)}{a_n},
\]
one gets
\begin{align*}
T_3(n,x)
&=
-\left(1+\frac{x}{2}\right)^2
+
\frac{2\left(1+\frac{x}{2}\right)\log(\sqrt{2\pi}a_n)}{a_n^2}  
-\frac{\log(2\pi)}{a_n^2}
-
\frac{\log^2(\sqrt{2\pi}a_n)}{a_n^4}.
\end{align*}
Then, for $x\in[0,M]$, we have
\[
\left|T_3(n,x)+\left(1+\frac{x}{2}\right)^2\right|
\le 
2\left(1+\frac{M}{2}\right)\frac{\log(\sqrt{2\pi}a_n)}{a_n^2}+\frac{\log(2\pi)}{a_n^2}
+
\frac{\log^2(\sqrt{2\pi}a_n)}{a_n^4}.
\]
The right hand side of the above inequality does not depend on $x$ and converges to $0$ as $n\to\infty$. Hence,
\[
T_3(n,x)
\longrightarrow
-\left(1+\frac{x}{2}\right)^2,\quad\mbox{ as }
n\to\infty
\]
uniformly for $x\in[0,M]$.

Combining the previous limits yields, for every $x\in[0,M]$,
\[
\lim_{n\to\infty}\frac{1}{\log n}\log f_n(x)
=
1-\left(1+\frac{x}{2}\right)^2
=
-x-\frac{x^2}{4}=-I(x).
\]
The convergence is  uniform in  the interval $[0,M]$. The proof is complete.
\end{proof}

We finally prove Theorem \ref{thm:main}.

\begin{proof}
Fix a Borel set $B\subset[0,\infty)$ with positive Lebesgue measure (otherwise the result is trivial). Set $I_B:=\essinf_B I$. 
Given $\varepsilon>0$, the set 
\[
B_\varepsilon:=\{x\in B\colon I(x)<I_B+\varepsilon\}
\]
has positive Lebesgue measure. Take $M>0$ large enough so that $[0,M]\cap B_\varepsilon$ has positive Lebesgue measure. 

By the uniform convergence in Lemma~\ref{lem:density}, there exists $n_0$ such that
\begin{equation}\label{eq:uniformLem}
n^{-I(x)-\varepsilon}\le f_n(x)\le n^{-I(x)+\varepsilon},
\qquad \mbox{ for all }x\in[0,M],\; n\ge n_0 .
\end{equation}

Next, to establish the existence of the limit in \eqref{eq:LDP} and identify its value, we derive matching upper/lower bounds for the corresponding limsup/liminf.

For the limit inferior, we have
\begin{align*}
\liminf_{n\to\infty}\frac{1}{\log n}\log\PP(Z_n\in B)&\ge \liminf_{n\to\infty}\frac{1}{\log n}\log\PP(Z_n\in B_\varepsilon\cap[0,M])\\
&= \liminf_{n\to\infty}\frac{1}{\log n}\log\int_{B_\varepsilon\cap[0,M]} f_n(x) dx\\
&\ge \liminf_{n\to\infty}\frac{1}{\log n}\log\int_{B_\varepsilon\cap[0,M]} n^{-I(x)-\varepsilon} dx\\
\\
&\ge \liminf_{n\to\infty}\frac{1}{\log n}\log\int_{B_\varepsilon\cap[0,M]} n^{-I_B-2\varepsilon} dx\\
&=-I_{B}-2\varepsilon.
\end{align*}

As for the limit superior, we obtain
\begin{align*}
\mathbb{P}(Z_n\in B)
&\le\mathbb{P}(Z_n\in B\cap[0,M])+\mathbb{P}(Z_n>M)\\
&\le
2\max\Big\{
\mathbb{P}(Z_n\in B\cap[0,M]),
\mathbb{P}(Z_n>M)
\Big\}.
\end{align*}
Hence,
\begin{align*}
\limsup_{n\to\infty}\frac{1}{\log n}\log \PP(Z_n\in B)
&\le
\max\Bigg\{
\limsup_{n\to\infty}\frac{1}{\log n}
\log \PP(Z_n\in B\cap[0,M]), \\[-1mm]
&\hspace{2.7cm}
\limsup_{n\to\infty}\frac{1}{\log n}
\log \PP(Z_n>M)
\Bigg\}.
\end{align*}

On the one hand, by Lemma \ref{lem:limsup}, we have
$$
\limsup_{n\to\infty}\frac{1}{\log n}\log \PP(Z_n>M)\le -I(M).
$$

On the other hand, we have
\begin{align*}
\limsup_{n\to\infty}\frac{1}{\log n}\log\PP(Z_n\in B)&
\le \limsup_{n\to\infty}\frac{1}{\log n}\log \int_{B\cap[0,M]} n^{-I_B+\varepsilon} dx\\
&=-I_B+\varepsilon.
\end{align*}

Putting all together, we have
\begin{align*}
-I_B-2\varepsilon&\le \liminf_{n\to\infty}\frac{1}{\log n}\log\PP(Z_n\in B)\\
 &\le \limsup_{n\to\infty}\frac{1}{\log n}\log\PP(Z_n\in B)\\
&\le \max\left\{-I_B+\varepsilon,-I(M)\right\}.
\end{align*}

Letting first $M\uparrow\infty$ and then $\varepsilon\downarrow0$, we obtain \eqref{eq:LDP}, 
which completes the proof.
\end{proof}

\begin{remark}
van der Hofstad and Honnappa \cite{van} derived sharp large deviation asymptotics for extrema of bivariate Gaussian random vectors. Their main result, however, is fundamentally different from ours (even in the degenerate case of perfect correlation), since  the normalization of the maxima differs from those considered here, leading to a different rate function $I(x)$. Moreover, while our result establishes a large deviation principle for all Borel sets of the positive real line, the results in \cite{van} are restricted to upper unbounded intervals. Finally, the comparison between the resulting large-deviation approximation and the classical Gumbel approximation is not addressed in \cite{van}.
\end{remark}


\begin{thebibliography}{9}

\bibitem{cramer}
Cramér, H., 1946.
\emph{Mathematical Methods of Statistics}.
Princeton University Press, Princeton.

\bibitem{dembo}
Dembo, A., Zeitouni, O., 1998.
\emph{Large Deviations Techniques and Applications}.
2nd ed., Springer, New York.

\bibitem{embrechts}
Embrechts, P., Klüppelberg, C., Mikosch, T., 1997.
\emph{Modelling Extremal Events for Insurance and Finance}.
Springer, Berlin.

\bibitem{fisher-tippett}
Fisher, R.A., Tippett, L.H.C., 1928.
Limiting forms of the frequency distribution of the largest or smallest member of a sample.
\emph{Math. Proc. Cambridge Philos. Soc.} 24, 180--190.

\bibitem{gnedenko}
Gnedenko, B.V., 1943.
Sur la distribution limite du terme maximum d'une série aléatoire.
\emph{Ann. Math.} 44 (3), 423--453.

\bibitem{mcneil}
McNeil, A.J., Frey, R., Embrechts, P., 2015.
\emph{Quantitative Risk Management: Concepts, Techniques and Tools}.
Princeton University Press, Princeton.

\bibitem{small}
Small, C.G., 2010.
\emph{Expansions and Asymptotics for Statistics}.
Chapman and Hall/CRC, Boca Raton.

\bibitem{van}
van der Hofstad, R., Honnappa, H. (2019). Large deviations of bivariate Gaussian extrema. Queueing Systems, 93(3), 333-349.



\end{thebibliography}
\end{document}